%
\documentclass[12pt, reqno]{amsart}

\usepackage[bookmarksnumbered, colorlinks, plainpages]{hyperref}
\usepackage{amsmath, amsthm, amscd, amsfonts, amssymb, graphicx, color}

\textheight 22.5truecm \textwidth 14.5truecm
\setlength{\oddsidemargin}{0.35in}\setlength{\evensidemargin}{0.35in}

\setlength{\topmargin}{-.5cm}

\newtheorem{theorem}{Theorem}[section]
\newtheorem{lemma}[theorem]{Lemma}
\newtheorem{proposition}[theorem]{Proposition}
\newtheorem{corollary}[theorem]{Corollary}
\theoremstyle{definition}
\newtheorem{definition}[theorem]{Definition}

\theoremstyle{remark}
\newtheorem{remark}[theorem]{Remark}
\numberwithin{equation}{section}

\newcommand\G{\mathbb{G}}

\begin{document}
\setcounter{page}{1}

\title[$L^2(\G)$ approach to approximate diagonals]{A Hilbert space approach to approximate diagonals for locally compact quantum groups}

\author[B. Willson]{Benjamin Willson$^1$$^{*}$}

\address{$^{1}$ Department of Mathematics, School of Natural Sciences, Hanyang University, 222 Wangsimni-ro, Seongdong-gu, Seoul 133-791, Korea.}
\email{\textcolor[rgb]{0.00,0.00,0.84}{bwillson@hanyang.ac.kr;bwillson@ualberta.ca}}


\subjclass[2010]{Primary 43A07; Secondary 20G42, 81R50, 22D35, .}

\keywords{Locally compact quantum group, amenability, operator amenability, approximate diagonal, quasicentral approximate identity.}

\date{Received: xxxxxx; Revised: yyyyyy; Accepted: zzzzzz.
\newline \indent $^{*}$ Corresponding author}

\begin{abstract}
For a locally compact quantum group $\G$, the quantum group algebra $L^1(\G)$ is operator amenable if and only if it has an operator bounded approximate diagonal.  It is known that if $L^1(\G)$ is operator biflat and has a bounded approximate identity then it is operator amenable.  In this paper, we consider nets in $L^2(\G)$ which suffice to show these two conditions and combine them to make an approximate diagonal of the form  $\omega_{{W'}^*\xi\otimes\eta}$ where $W$ is the multiplicative unitary and $\xi\otimes\eta$ are simple tensors in $L^2(\G)\otimes L^2(\G)$.  Indeed, if $L^1(\G)$ and $L^1(\hat{\G})$ both have a bounded approximate identity and either of the corresponding nets in $L^2(\G)$ satisfies a condition generalizing quasicentrality then this construction generates an operator bounded approximate diagonal.  In the classical group case, this provides a new method for constructing approximate diagonals emphasizing the relation between the operator amenability of the group algebra $L^1(G)$ and the Fourier algebra $A(G)$.
\end{abstract} \maketitle

\section{Introduction}

\noindent 
A locally compact group $G$ is amenable if and only if the group algebra $L^1(G)$ is an amenable Banach algebra.  Johnson\cite{johnson} showed this (and defined amenability for Banach algebras) by using a bounded approximate identity and a net of functions tending to left invariance (or tending to invariance under multiplication by positive norm $1$ elements of $L^1(G)$).  Ruan \cite{ruan} proved that the Fourier algebra of $G$ is operator amenable (has an operator approximate diagonal) if and only if $G$ is amenable.  Again, this can be shown by combining a bounded approximate identity for $A(G)$ and a net of (quasi-central) functions tending to invariance under multiplication in $A(G)$.  

In \cite{kv2003}, Kustermans and Vaes formalize the notion of a locally compact quantum group as a von Neumann algebra with a co-multiplication and left and right Haar weights.  The predual of this von Neumann algebra is a Banach algebra which is analogous to both $L^1(G)$ and $A(G)$ in the group setting.  Indeed, there is a `quantum' version of the Pontryagin duality theorem which extends the duality between $L^1(G)$ and $A(G)$.    There are various notions of amenability which generalize to the quantum group setting.  It is natural to ask whether classical results about the equivalence of these extends to quantum groups. In particular, how are the amenability of a locally compact quantum group $\G$ and its dual $\hat{\G}$ related?

In section 2, we provide some background including the definitions of strong amenability and co-amenability.  These naturally dual conditions are characterized by the existence of certain nets in the underlying Hilbert space.  These nets are related to a left invariant net and a bounded approximate identity (resp.) in $L^1(\G)$. On the dual side, they are related to a b.a.i. and left invariant net in $L^1(\hat{\G})$.  

In \cite{ruanxu}, Ruan and Xu show that for a Kac algebra, the predual of the von Neumann algebra is operator amenable if it admits a bounded approximate identity and there is a net in the underlying Hilbert space tending to translation invariance and quasicentrality.  In section 3 of this paper, we extend their result to preduals of von Neumann algberas on locally 
compact quantum groups.  The current approach also constructs an operator approximate diagonal directly from the nets in $L^2(\G)$ rather than 
using machinery in the second dual of $L^1(\G)$.  The main result is that if $\G$ is strongly amenable and co-amenable and either of the nets satisfies a quasicentral condition then $L^1(\G)$ is operator amenable.  

In section 4, the analogous construction of an operator bounded approximate diagonal for $L^1(\hat{\G})$ is described.  The quasicentral conditions of the two nets are not dual to each other.  We show a result related to a dual version of this quasicentrality.

Other constructions of quasi-central approximate diagonals have been considered by Stokke in \cite{stokke}.  Nets tending to left invariance have been studied for semidirect products in \cite{willson2009PAMS}.The nets used throughout this paper are also related to operator biflatness of $L^1(\G)$.  It was conjectured by Aristov, Runde, and Spronk\cite{ars} that for a locally compact group, $A(G)$ is always operator biflat.  This remains an open question.

\section{Background}

We rely upon the paper of Kustermans and Vaes	\cite{kv2003} to provide further details on locally compact quantum groups and will herein provide only a brief introduction.  
We will use the notation from \cite{kv2003} with one significant adjustment.  We will emphasize the connection
to the classical results by referring to a locally compact quantum group as $\G=(M, \Gamma, \phi, \psi)$ and using $L^\infty(\G)=M$, 
$L^1(\G)=M_*$, and $L^2(\G)=\mathcal{H}_\phi$.  However, one should not infer from this notation that there is some set `$\G$' on which we define spaces of functions.

\begin{definition}
A {\em locally compact quantum group} $\G=(M, \Gamma, \phi, \psi)$ consists of a von Neumann algebra $M$ (or $L^\infty(\G)$), a comultiplication $\Gamma$, and left ($\phi$) and right ($\psi$) Haar weights.  

The unique (as a Banach space) predual of $L^\infty(\G)$ has a multiplication given by the pre-adjoint of $\Gamma$.  This makes the Banach algebra $L^1(\G)$ which will be referred to as the {\em quantum group algebra of $\G$}.  

The left Haar weight $\phi$ is used, via the GNS construction, to create the Hilbert space $L^2(\G)$.  Using this Hilbert space, we consider $L^\infty(\G)\hookrightarrow B(L^2(\G))$.  Furthermore, every $\zeta\in L^2(\G)$ can generate a $\omega_{\zeta}\in L^1(\G)$ via:
$
X(\omega_\zeta) = \langle X\zeta, \zeta\rangle
$
\end{definition}

We will also consider several tensor products of the above spaces:  $L^2(\G)\otimes_{2}L^2(\G)$ is the Hilbert space tensor product, $L^\infty(\G)\otimes_{\text VN}L^\infty(\G)$ is the von Neumann tensor product (in $B(L^2(\G)\otimes_2 L^2(\G))$), and $L^1(\G)\hat\otimes L^1(\G)$ is the operator space projective tensor product (see \cite{effrosruan} for details) which is the predual of $L^\infty(\G)\otimes_{\text VN}L^\infty(\G)$.
 
 There exists a unitary $W$ in $B(L^2(\G)\otimes L^2(\G))$  (in fact, $W$ is in $L^\infty(\G)\otimes_{VN}L^\infty(\hat{\G})$)such that $\Gamma(x) = W^*(1\otimes x) W$ for $x\in L^\infty(\G)$.  We call $W$  the left multiplicative unitary.  Similarly there is a right multiplicative unitary $V$ and 
using $V$ and $L^2(\G)$, we can reconstruct $L^\infty(\G)$ as the $\sigma$-strong closure of $\{(\omega\otimes\iota)(V)|	\omega\in B(L^2(\G))_*\}.$
The dual LCQG, $\hat{\G}$ is the $\sigma$-strong closure of $\{(\omega\otimes \iota)(W)|\omega\in B(L^2(\G))_*\}$  ( Here, $\{(\omega\otimes \iota)W$ denotes the slice map of $W$ with $\omega$ in the first component).

The left Haar weight gives rise to a modular conjugation $J$   for $L^2(\G)$ and there is a unique left invariant weight for $\hat{\G}$ that has a corresponding modular conjugation $\hat{J}$.

Along with $W$ and $V$, we can find similar unitaries in $B(L^2(\G)\otimes_2 L^2(\G))$ which correspond to related LCQGs.  In particular, there are $\hat{W}$ (the left multiplicative unitary for the dual LCQG), $W^{\text op}$ (for the opposite LCQG -- $\G$ with opposite co-multiplication), and $W'$ (for the commutant LCQG -- with von Neumann algebra $L^\infty(\G)'$).

\begin{proposition}\label{prop:list}
Among these operators, the following relations are satisfied:

\begin{align*}
W^* &= (\hat{J} \otimes J)W(\hat{J} \otimes J)\\
\hat{J}J&=\nu^{\frac{i}{4}}J\hat{J}\\
\hat{W} &=\Sigma W^* \Sigma\\
V&=(\hat{J}\otimes\hat{J})\Sigma W^*\Sigma(\hat{J}\otimes\hat{J})\\
\hat{V}&=(J\otimes J)W(J\otimes J)\\
W^{\text op} &= \Sigma V^* \Sigma = (\hat{J}\otimes\hat{J}) W(\hat{J}\otimes\hat{J})\\
W' &=\hat{V} = (J\otimes J)W(J\otimes J)\\
W_{12}W_{13}W_{23} &= W_{23}W_{12}\\
\hat{W}' &= \widehat{W^{\text{op}}}
\end{align*}
where $\nu$ is some positive number (the scaling constant of the quantum group), $\Sigma$ denotes the flip map for tensors, and the subscripts on $W$ are the leg notation (eg. $W_{12}=W\otimes 1\in B(L^2(\G)\otimes L^2(\G)\otimes L^2(\G)$).
\end{proposition}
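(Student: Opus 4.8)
The plan is to treat this proposition as a compilation: each displayed identity is either one of the structural theorems of the Kustermans--Vaes theory or follows from the definitions of the decorated quantum groups ($\hat{\G}$, the opposite $\G^{\text{op}}$, and the commutant $\G'$) together with a small set of core relations. I would organize the argument around three primitive inputs, all available from \cite{kv2003}: (i) the pentagon equation $W_{12}W_{13}W_{23}=W_{23}W_{12}$, which is part of the very definition of the modular multiplicative unitary; (ii) the fundamental symmetry $W^*=(\hat{J}\otimes J)W(\hat{J}\otimes J)$ relating $W$ to its adjoint through the two modular conjugations; and (iii) the standard identity $\hat{W}=\Sigma W^*\Sigma$ expressing the dual multiplicative unitary. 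The remaining lines should then be derived by combining these with the definitions, so that the bulk of the proposition is a matter of careful substitution rather than new analysis.

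Concretely, I would proceed as follows. Conjugating (ii) by the flip $\Sigma$ and using $\Sigma(\hat{J}\otimes J)\Sigma=(J\otimes\hat{J})$ together with (iii) yields the dual companion symmetry $\hat{W}^*=(J\otimes\hat{J})\hat{W}(J\otimes\hat{J})$, which exhibits how duality interchanges the roles of $J$ and $\hat{J}$; from this all the dual conjugation formulas for $\hat{V}$ follow. The opposite quantum group is defined by reversing the comultiplication, so its left multiplicative unitary is $W^{\text{op}}=\Sigma V^*\Sigma$; substituting the expression for $V$ in terms of $W$ and the conjugations then produces the second equality $W^{\text{op}}=(\hat{J}\otimes\hat{J})W(\hat{J}\otimes\hat{J})$. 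The commutant quantum group has $L^\infty(\G')=L^\infty(\G)'$, and its multiplicative unitary is obtained by conjugating $W$ with $J\otimes J$, giving $W'=(J\otimes J)W(J\otimes J)$; identifying this with $\hat{V}$ is then a matter of matching the two definitions.

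The commutation relation $\hat{J}J=\nu^{\frac{i}{4}}J\hat{J}$ is of a genuinely different character: it is a statement about how the two modular conjugations fail to commute, controlled by the scaling constant $\nu$. I would derive it from the modular theory of the Haar weights, specifically from the action of the scaling group on $J$ and $\hat{J}$ and the interplay between $\nu$ and the modular element; this is the one line that requires the analytic apparatus of \cite{kv2003} rather than pure algebra on $W$. Finally, the consistency identity $\hat{W}'=\widehat{W^{\text{op}}}$ would be verified by expanding both sides through the formulas just obtained and checking that the accumulated flips and conjugations agree.

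I expect the main obstacle to be purely bookkeeping: keeping precise track of which conjugation ($J$ versus $\hat{J}$) and which power of the phase $\nu^{\frac{i}{4}}$ appears after each conjugation by $\Sigma$ or by $J\otimes J$, given that $J$ and $\hat{J}$ do not commute. The real danger is that a convention mismatch among the dual, opposite, and commutant constructions introduces a spurious scalar. The relation $\hat{J}J=\nu^{\frac{i}{4}}J\hat{J}$ is exactly what reconciles these phases, so the cross-check $\hat{W}'=\widehat{W^{\text{op}}}$ serves a double role: it is both a consequence of the other identities and a consistency test confirming that the phases have been tracked correctly throughout.
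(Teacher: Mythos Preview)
Your proposal is correct, and in spirit it matches the paper's own treatment: both rely on the Kustermans--Vaes reference \cite{kv2003} as the source of all the displayed identities. The paper's proof is in fact just a bare citation (``See \cite{kv2003} 2.2, 2.12, 2.15, and 4.1''), whereas you go further and sketch how the identities for $V$, $\hat{V}$, $W^{\text{op}}$, $W'$, and $\hat{W}'=\widehat{W^{\text{op}}}$ can be derived from a small set of primitives (the pentagon relation, the symmetry $W^*=(\hat{J}\otimes J)W(\hat{J}\otimes J)$, and $\hat{W}=\Sigma W^*\Sigma$) together with the definitions of the opposite and commutant quantum groups. That additional roadmap is sound and arguably more informative than the paper's one-line deferral; it also correctly isolates $\hat{J}J=\nu^{i/4}J\hat{J}$ as the one relation requiring genuine modular-theoretic input rather than algebra on $W$.
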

\begin{proof}
See \cite{kv2003} 2.2, 2.12, 2.15, and 4.1.
\end{proof}
\begin{definition}
A locally compact quantum group is {\em{co-amenable}} if $L^\infty(\G)^*$ (with respect to the two Arens products) is unital.  Co-amenability is equivalent to the existence of bounded approximate identity for $L^1(\G)$.  B\'edos and Tuset\cite{bedostuset} showed that $\G$ is co-amenable if and only if there is a net of norm one vectors $(\eta_\beta)_\beta$ in $L^2(\G)$ such that for each $\zeta\in L^2(\G)$:
\[\tag{CA}
\lim_{\beta} \|W(\eta_\beta\otimes \zeta) - (\eta_\beta\otimes \zeta) \|_2= 0.
\]

Runde\cite{runde} remarked that such a net also satisfies the equivalent condition for the opposite LCQG $\G^{\text op}$:
\[
\lim_{\beta} \|W^{\text op}(\eta_\beta\otimes \zeta) - (\eta_\beta\otimes \zeta) \|_2= 0
\]
hence such a net generates a 2-sided bounded approximate identity for $L^1(\G)$.

\end{definition}

\begin{definition}
A locally compact quantum group is {\em{strongly amenable}} if $\hat{\G}$ is co-amenable.  Equivalently, $\G$ is strongly amenable if there is a net of norm one vectors 
$(\xi_\alpha)_\alpha$ in $L^2(\G)$ such that for each $\zeta\in L^2(\G)$:
\[
\tag{SA}
\lim_{\alpha} \|W(\zeta\otimes \xi_\alpha) - (\zeta\otimes\xi_\alpha) \|_2= 0.
\]
\end{definition}

\begin{remark}
There is another definition of amenability for locally compact quantum groups that depends on the existence of a translation invariant mean in the dual of $L^\infty(\G)$.  If $\G$ is strongly amenable, then it is amenable, but the converse is an open question.  (In the group case, amenability and strong amenability are equivalent - this is related to the equivalence of Reiter's P1 and P2 conditions).  See \cite{bedostuset, dawsrunde, DQV} for investigations into this question.
\end{remark}

\begin{remark}
Johnson's main result of 
\cite{johnson} was to show that a group is amenable precisely when the group algebra has a certain homological property which he also termed amenability (for Banach algebras).  It is possible to characterize this in terms of the existence of 
an approximate diagonal in the tensor product $L^1(G)\hat{\otimes} L^1(G)$.  Ruan\cite{ruan} extended Johnson's notion of amenability to completely contractive Banach algebras.  In particular, he showed that $G$ is amenable if and only if there is an operator bounded approximate diagonal for $A(G)$.   For LCQGs the appropriate operator space structure on $L^1(\G)$ is that resulting from considering the predual of the von Neumann algebra operator space structure of $L^\infty(\G)$.  This leads to the following characterization:
\end{remark}
\begin{definition}
The quantum group algebra $L^1(\G)$ is {\em{operator amenable}} if it admits an operator bounded approximate diagonal.  That is, if there is a net $(x_\gamma)_\gamma$ in $L^1(\G)\hat{\otimes}L^1(\G)$ such that for any $a\in L^1(\G)$:
\begin{align*}
\tag{OBAD1}\|a\cdot x_\gamma - x_\gamma\cdot a\|_{L^1(\G)\hat{\otimes}L^1(\G)}&\rightarrow 0, 
{\text{ and;}}\\
\tag{OBAD2}\|\Gamma_*(x_\gamma)a - a\|_{L^1(\G)}&\rightarrow 0.
\end{align*}

Here $\cdot$ denotes the natural bimodule action of $L^1(\G)$ on $L^1(\G)\hat{\otimes}L^1(\G)$ which is multiplication on the left in the first co-ordinate and multiplication on the right in the second.
\end{definition}

\section{Operator approximate diagonals from nets in $L^2(\G)$}

This section uses the machinery of quantum groups to discuss operator bounded approximate diagonals for $L^1(\G)$. 
We construct these diagonals by taking the simple tensors of elements of nets in $L^2(\G)$ satisfying (SA) and (CA) and then apply the multiplicative unitary of the commutant quantum group to the simple tensors.  This results in a net in $L^2(\G)\otimes_2L^2(\G)$.  The vector states associated to this net generate a net in $L^1(\G)\hat{\otimes}L^1(\G)$.

We begin this section by showing that the constructed net satisfies (OBAD1) for any locally compact quantum group.  For a group $G$ (OBAD2) is automatically satisfied so the above mentioned construction provides a novel method of describing a bounded approximate diagonal for $L^1(G)$.  This approach relies only on the LCQG structure, but does seem to require the equivalence of the multiplicative operators $W$ and $W'$.  This requirement can be weakened.  The operators only need to be approximately equivalent when applied to the relevant nets.  Continuing this approach, we show that if there are nets in $L^2(\G)$ (one which generates a bounded approximate identity,  and the other which demonstrates strong amenability) and $W^*W'$ applied to either of these nets approximates the identity then we can construct an operator bounded approximate diagonal.

The following lemmas will be useful in the sequal.  They are straightforward manipulations of the pentagonal rule and some of the other properties listed in proposition \eqref{prop:list}.

\begin{lemma}
\label{lem:pentagonal}
For any locally compact quantum group, the following equations involving multiplicative unitaries hold:
\[
W_{12}{W'}_{23}^* = W'^*_{23}W_{13}W_{12}
;
\]

\[
W_{23}{W'}_{12}^* = W'^*_{12}W'^*_{13}W_{23}
;
\]
and
\[
W^*_{13}W^*_{23} = (\hat{J}\otimes\hat{J}\otimes J)W_{13}W_{23}(\hat{J}\otimes\hat{J}\otimes J).
\]
\end{lemma}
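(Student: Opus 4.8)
The plan is to treat all three as operator identities on $L^2(\G)\otimes_2 L^2(\G)\otimes_2 L^2(\G)$ and to derive each from the pentagon relation $W_{12}W_{13}W_{23}=W_{23}W_{12}$ together with the identities of Proposition~\ref{prop:list}. Two structural facts do most of the work. First, the conjugation formulas $W'=(J\otimes J)W(J\otimes J)$ and $W^*=(\hat J\otimes J)W(\hat J\otimes J)$ let me trade any $W'$ or $W^*$ for $W$ conjugated by an antiunitary on the relevant pair of legs, and I will use repeatedly that $J^2=\hat J^2=1$ and that operators supported on disjoint legs commute. Second, because $W\in L^\infty(\G)\otimes_{VN}L^\infty(\hat\G)$ has its first leg in $L^\infty(\G)$ while $W'=\hat V$ has its first leg in the commutant $L^\infty(\G)'$ (both second legs lying in $L^\infty(\hat\G)$), the legs that $W_{12}$ and $W'_{13}$ share lie in the commuting pair $L^\infty(\G),\,L^\infty(\G)'$; hence $W_{12}W'_{13}=W'_{13}W_{12}$, and likewise $W_{13}W'_{12}=W'_{12}W_{13}$. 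These two commutations are the only algebraic input beyond the pentagon relations for $W$ and for $W'$.

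I would dispatch the third identity first, since it is the cleanest. Put $U=\hat J\otimes\hat J\otimes J$, an antiunitary involution. Commuting the spectator factor on the untouched leg to the right so that it squares to $1$, and then applying $W^*=(\hat J\otimes J)W(\hat J\otimes J)$ on the remaining two legs, gives $UW_{13}U=W_{13}^*$ and $UW_{23}U=W_{23}^*$. Inserting $U^2=1$ between the two factors then yields $W^*_{13}W^*_{23}=U\,W_{13}W_{23}\,U$, which is exactly the claim. No pentagon relation is needed here.

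For the first two identities I would substitute $W'_{jk}=(J\otimes J)_{jk}W_{jk}(J\otimes J)_{jk}$ to rewrite each side through $W$ and $J$ alone, and then use the pentagon relation for $W$ to convert the product living on one pair of legs into a product on the third leg. The commutations from the first paragraph are what allow the surviving $W$-factor to be slid past the first leg of the $W'$-factor without cost, while the $J$'s are transported through the disjoint legs (keeping track, where $J$ and $\hat J$ cross, of the scaling constant via $\hat J J=\nu^{i/4}J\hat J$). Equivalently, one can read the first two identities as the \emph{crossed} pentagon relations linking the fundamental unitary of $\G$ with that of the commutant quantum group, and derive them from the pentagon relation for $W'$ together with the same two commutations.

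I expect the only real obstacle to be bookkeeping rather than ideas: one must keep constant track of whether each tensor leg sits in $L^\infty(\G)$, $L^\infty(\G)'$, or $L^\infty(\hat\G)$, so as to invoke the correct commutation, and one must transport the antiunitaries $J,\hat J$ through the legs with the right conjugate-linearity. In particular $W_{12}$ and $W'_{23}$ (respectively $W_{23}$ and $W'_{12}$) overlap on a single leg where their components lie in algebras that need not commute, so no direct commutation is available and those factors must instead be routed through the pentagon relation. I would confirm the exact placement of the adjoints and the order of the factors on the right-hand sides by evaluating both sides on simple tensors in the classical case $\G=G$, where $W$ and $W'$ act by the point transformations $(W\zeta)(s,t)=\zeta(s,s^{-1}t)$ and $(W'\zeta)(s,t)=\zeta(s,ts^{-1})$.
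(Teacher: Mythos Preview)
Your plan is correct and matches the paper's proof: for the third identity the paper does exactly your antiunitary computation with $U=\hat J\otimes\hat J\otimes J$, and for the first two it substitutes $W'=(J\otimes J)W(J\otimes J)$, inserts a spare $\hat J$ on the free leg to turn $W$ into $W^*$ via $W^*=(\hat J\otimes J)W(\hat J\otimes J)$, applies the pentagon relation, and then conjugates back. The one divergence is that the paper never invokes the commutations $W_{12}W'_{13}=W'_{13}W_{12}$ or the pentagon relation for $W'$ that you single out as essential input---the $\hat J$-insertion trick lets everything run through the pentagon relation for $W$ alone, so those extra facts (while true) are not needed.
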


\begin{proof}
Rewrite $W'$ and $W^*$ in terms of $W$ and the modular conjugations and apply the pentagonal rule.

\begin{align*}
W_{12}{W'}_{23}^* &= W_{12}(1\otimes J\otimes J)W_{23}^*(1\otimes J\otimes J)\\
&=W_{12}(\hat{J}\otimes J\otimes J)(\hat{J}\otimes 1\otimes 1)W_{23}^*(1\otimes J\otimes J)\\
&=(\hat{J}\otimes J\otimes J)W^*_{12}W_{23}^*(\hat{J}\otimes 1\otimes 1)(1\otimes J\otimes J)\\
&=(\hat{J}\otimes J\otimes J)W^*_{23}W^*_{13}W^*_{12}(\hat{J}\otimes J\otimes J)\\
&=W'^*_{23}W_{13}W_{12}.
\end{align*}

For the second result:
\begin{align*}
W_{23}{W'}_{12}^* &= W_{23}(J\otimes J \otimes 1)W_{12}^*(J\otimes J\otimes 1)\\
&=W_{23}(J\hat{J}\otimes 1 \otimes 1)(\hat{J}\otimes J \otimes 1)W_{12}^*(J\otimes J\otimes 1)\\
&=(J\hat{J}\otimes 1 \otimes 1)W_{23}W_{12}(\hat{J}\otimes J \otimes 1)(J\otimes J\otimes 1)\\
&=(J\hat{J}\otimes 1 \otimes 1)W_{12}W_{13}W_{23}(\hat{J}J\otimes 1 \otimes 1)\\
&=W'^*_{12}W'^*_{13}W_{23}.
\end{align*}

For the third result it is not necessary to apply the pentagonal rule, but only to notice that parts of simple tensors commute readily with leg tensors provided those parts are on different `legs'.
\begin{align*}
W^*_{13}W^*_{23} &= (\hat{J}\otimes 1 \otimes J)W_{13}(\hat{J}\otimes 1\otimes J)(1\otimes\hat{J}\otimes J)W_{23}(1\otimes \hat{J}\otimes J) \\
 &= (\hat{J}\otimes 1 \otimes J)W_{13}(\hat{J}\otimes \hat{J}\otimes 1)W_{23}(1\otimes \hat{J}\otimes J) \\
 &= (\hat{J}\otimes \hat{J} \otimes J)W_{13}W_{23}(\hat{J}\otimes \hat{J}\otimes J) 
\end{align*}
\end{proof}

The following lemma is an adaptation of \cite[3.14]{ruanxu}.

\begin{lemma}
\label{lem:ruanxu}
Given a unit vector $\xi\in L^2(\G)$, the map $\theta_\xi:L^\infty(\G){\otimes}_{VN}L^\infty(\G)\rightarrow B(L^2(\G))$ given by
\[
\theta_\xi(\Lambda) = (\omega_\xi\otimes i)(W' \Lambda  W'^*)
\]
(for $\Lambda\in L^\infty(\G)$) 
is weak* continuous, unital, and completely positive
.  Furthermore, $\theta_\xi$ maps $L^\infty(\G)\bar\otimes 
L^\infty(\G)$ into $L^\infty(\G)$.
\end{lemma}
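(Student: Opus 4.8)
The plan is to factor $\theta_\xi$ as the composition of the inner $*$-isomorphism $\beta$ of $B(L^2(\G)\otimes_2 L^2(\G))$ given by $\beta(\Lambda)=W'\Lambda W'^*$ with the slice map $(\omega_\xi\otimes i)$, and to read the first three properties straight off this factorization. Since $W'$ is unitary, $\beta$ is a normal (weak* continuous) $*$-isomorphism, hence completely positive; since $\xi$ is a unit vector, $\omega_\xi$ is a normal state, so the slice map $(\omega_\xi\otimes i)$ is normal and completely positive. Thus $\theta_\xi=(\omega_\xi\otimes i)\circ\beta$ is weak* continuous and completely positive as a composite of such maps, and unitality is the one-line computation
\[
\theta_\xi(1\otimes 1)=(\omega_\xi\otimes i)(W'W'^*)=(\omega_\xi\otimes i)(1\otimes 1)=\omega_\xi(1)\,1=\|\xi\|_2^2\,1=1 .
\]

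The substantive claim is that $\theta_\xi$ maps $L^\infty(\G)\bar\otimes L^\infty(\G)$ into $L^\infty(\G)$. I would reduce this to the single containment $W'\bigl(L^\infty(\G)\bar\otimes L^\infty(\G)\bigr)W'^*\subseteq B(L^2(\G))\bar\otimes L^\infty(\G)$, because the normal slice map $(\omega_\xi\otimes i)$ carries $B(L^2(\G))\bar\otimes L^\infty(\G)$ into $L^\infty(\G)$. To establish the containment I would use that $W'$ is the left multiplicative unitary of the commutant quantum group, whose von Neumann algebra is $L^\infty(\G)'$, so that its comultiplication $\Gamma'(y)=W'^*(1\otimes y)W'$ maps $L^\infty(\G)'$ into $L^\infty(\G)'\bar\otimes L^\infty(\G)'$. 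Then for any $\Lambda\in L^\infty(\G)\bar\otimes L^\infty(\G)$ and any $y\in L^\infty(\G)'$, conjugating the commutator by $W'^*$ and $W'$ gives
\[
W'^*\bigl[\,W'\Lambda W'^*,\;1\otimes y\,\bigr]W'=\bigl[\,\Lambda,\;W'^*(1\otimes y)W'\,\bigr]=\bigl[\,\Lambda,\;\Gamma'(y)\,\bigr]=0,
\]
the last equality holding because $\Lambda\in L^\infty(\G)\bar\otimes L^\infty(\G)=\bigl(L^\infty(\G)'\bar\otimes L^\infty(\G)'\bigr)'$ commutes with $\Gamma'(y)\in L^\infty(\G)'\bar\otimes L^\infty(\G)'$ by the commutation theorem for von Neumann tensor products. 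Hence $W'\Lambda W'^*$ commutes with $1\otimes L^\infty(\G)'$, i.e. it lies in $\bigl(\mathbb{C}1\,\bar\otimes\,L^\infty(\G)'\bigr)'=B(L^2(\G))\bar\otimes L^\infty(\G)$, which is exactly the containment sought; applying the slice map then lands $\theta_\xi(\Lambda)$ in $L^\infty(\G)$.

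I expect the main obstacle to be purely bookkeeping of leg memberships rather than any hard analysis: one must be sure that $\Gamma'=W'^*(1\otimes\cdot)W'$ is genuinely the comultiplication of the commutant quantum group and therefore has range in $L^\infty(\G)'\bar\otimes L^\infty(\G)'$, and one must invoke the commutation theorem $\bigl(L^\infty(\G)\bar\otimes L^\infty(\G)\bigr)'=L^\infty(\G)'\bar\otimes L^\infty(\G)'$ in the correct place. Both ingredients come from the structural identities collected in Proposition~\ref{prop:list} together with $J\,L^\infty(\G)\,J=L^\infty(\G)'$; once they are in hand the argument is the short commutator computation above followed by the standard fact that a normal slice map sends $B(L^2(\G))\bar\otimes L^\infty(\G)$ into $L^\infty(\G)$. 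Note that this direct approach avoids any reduction to simple tensors, since the commutator identity already holds for arbitrary $\Lambda$ in the von Neumann tensor product.
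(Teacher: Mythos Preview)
Your argument is correct and takes a genuinely different route from the paper's. The paper exploits that $W'\in L^\infty(\G)'\bar\otimes L^\infty(\hat{\G})$ to commute the first tensor leg of $\Lambda$ past $W'$, and then, on simple tensors $X\otimes Y$, rewrites $W'(1\otimes Y)W'^*$ in terms of $\Gamma(Y)$ (via $W'=(J\hat J\otimes 1)W^*(\hat J J\otimes 1)$) to obtain the explicit formula $\theta_\xi(X\otimes Y)=(\omega_{J\hat J\xi}\otimes i)\bigl((X\otimes 1)\Gamma(Y)\bigr)$; the range claim follows since $(X\otimes 1)\Gamma(Y)\in L^\infty(\G)\bar\otimes L^\infty(\G)$, and one extends by normality (with a reference to Ruan--Xu for details). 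By contrast, you argue globally: using that $\Gamma'(y)=W'^*(1\otimes y)W'\in L^\infty(\G)'\bar\otimes L^\infty(\G)'$ together with the tensor commutation theorem, you show $W'\Lambda W'^*$ commutes with $1\otimes L^\infty(\G)'$ and hence lies in $B(L^2(\G))\bar\otimes L^\infty(\G)$, after which the slice map lands in $L^\infty(\G)$. The paper's approach buys a concrete formula for $\theta_\xi$ on elementary tensors; yours is cleaner, avoids any reduction to simple tensors or density/normality extension, and isolates precisely the structural ingredient (the comultiplication of the commutant quantum group) that makes the range statement work.
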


\begin{proof}
Since $W' \in L^\infty(\G)'\otimes_{VN}L^\infty(\hat{\G})$, the first leg of $\Lambda$ commutes with $W'$.  For simple tensors $X\otimes Y\in L^\infty(\G)\otimes L^\infty(\G)$, 
\[
\theta_\xi(X\otimes Y) = (\omega_{J\hat{J}\xi}\otimes i)((X\otimes 1)\Gamma(Y))
\]
For complete details, see \cite[page 205]{ruanxu}.
\end{proof}

We now show that it is possible to combine nets in $L^2(\G)$ with properties (CA) and (SA) to create a net in $L^1(\G)\hat\otimes L^1(\G)$ which satisfies the first condition of an approximate diagonal.  The second condition is immediately satisfied if $W=W'$, but if this is not the case, then some additional assumption must be made.
\begin{theorem}
\label{thm:33}
Let $\G$ be a locally compact quantum group.  Let $\varepsilon>0$.  Let $\zeta\in L^2(\G)$ with $\|\zeta\|=1$.
Suppose that $\xi, \eta\in L^2(\G)_1$ such that 
\[
\| W(\zeta\otimes \xi) - (\zeta\otimes \xi)\|_2<\varepsilon;
\]

and
\[
\|\omega_\zeta\ast\omega_\eta - \omega_\eta\ast\omega_\zeta\|_1<\varepsilon.
\]

Then for $\Lambda \in L^\infty(\G)\otimes_{VN} L^\infty(\G)$
\[
\left|(\omega_\zeta\cdot \omega_{{W'}^*(\xi\otimes\eta)} - \omega_{{W'}^*(\xi\otimes\eta)}\cdot\omega_\zeta)\left(\Lambda\right)\right|<3\varepsilon \left\|\Lambda\right\|
\]
\end{theorem}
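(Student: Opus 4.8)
The plan is to transport the problem from the operator space tensor product to the Hilbert space by means of the unital completely positive map $\theta_\xi$ of Lemma \ref{lem:ruanxu}, and then to exploit an asymmetry between the two module actions: the right action passes through $\theta_\xi$ exactly, whereas the left action passes through only up to an error governed by the first hypothesis; the resulting discrepancy between the two orders of convolution is then absorbed by the quasicentrality hypothesis. The constant $3$ arises as $2\varepsilon+\varepsilon$ from these two sources.

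First I would record the slice identity $\omega_{{W'}^*(\xi\otimes\eta)}(\Lambda)=\langle W'\Lambda{W'}^*(\xi\otimes\eta),\xi\otimes\eta\rangle=\omega_\eta(\theta_\xi(\Lambda))$, valid for every $\Lambda\in L^\infty(\G)\otimes_{VN}L^\infty(\G)$. Writing $L_{\omega_\zeta}=(\omega_\zeta\otimes\iota)\Gamma$ and $R_{\omega_\zeta}=(\iota\otimes\omega_\zeta)\Gamma$ for the left and right translations and unwinding the bimodule action dually, the two quantities to be compared become
\begin{align*}
(\omega_\zeta\cdot\omega_{{W'}^*(\xi\otimes\eta)})(\Lambda) &= \omega_\eta\big(\theta_\xi((L_{\omega_\zeta}\otimes\iota)\Lambda)\big),\\
(\omega_{{W'}^*(\xi\otimes\eta)}\cdot\omega_\zeta)(\Lambda) &= \omega_\eta\big(\theta_\xi((\iota\otimes R_{\omega_\zeta})\Lambda)\big).
\end{align*}
For the second term I would use the simple-tensor description in Lemma \ref{lem:ruanxu}, which exhibits $Y\mapsto\theta_\xi(X\otimes Y)$ as a left translation of $Y$; since left and right translations commute and $\iota\otimes R_{\omega_\zeta}$ only alters the second leg, one gets $\theta_\xi\circ(\iota\otimes R_{\omega_\zeta})=R_{\omega_\zeta}\circ\theta_\xi$ (extended from simple tensors by weak$^*$-continuity of $\theta_\xi$). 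Hence the second term equals exactly $(\omega_\eta\ast\omega_\zeta)(\theta_\xi(\Lambda))$.

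The heart of the matter is the first term, where the analogous commutation fails. Introducing a third copy of $L^2(\G)$ to carry $\zeta$ and realizing $L_{\omega_\zeta}$ on the first leg of $\Lambda$ as the comultiplication $\Gamma$ (i.e. conjugation by $W_{31}$) followed by a slice in the third leg, I would rewrite, with $\Omega=\xi\otimes\eta\otimes\zeta$,
\[
(\omega_\zeta\cdot\omega_{{W'}^*(\xi\otimes\eta)})(\Lambda)=\langle W'_{12}W^*_{31}\Lambda_{12}W_{31}{W'}^*_{12}\,\Omega,\ \Omega\rangle .
\]
The same bookkeeping shows that the candidate value $S:=(\omega_\zeta\ast\omega_\eta)(\theta_\xi(\Lambda))$, namely the first term with the convolution factors in the opposite order, equals $\langle A\,W_{32}\Omega,\ W_{32}\Omega\rangle$ with $A:=W'_{12}\Lambda_{12}{W'}^*_{12}$. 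Applying the leg-permuted instance $W_{31}{W'}^*_{12}={W'}^*_{12}W_{32}W_{31}$ of the first identity in Lemma \ref{lem:pentagonal} and its adjoint, the first term collapses to $\langle A\,W_{32}W_{31}\Omega,\ W_{32}W_{31}\Omega\rangle$. Thus the first term and $S$ differ only by the insertion of $W_{31}$, and since $\|W_{31}\Omega-\Omega\|=\|W(\zeta\otimes\xi)-\zeta\otimes\xi\|<\varepsilon$, the elementary bound $|\langle Au,u\rangle-\langle Av,v\rangle|\le\|A\|\,\|u-v\|(\|u\|+\|v\|)$ with $\|A\|\le\|\Lambda\|$ gives $|(\omega_\zeta\cdot\omega_{{W'}^*(\xi\otimes\eta)})(\Lambda)-S|<2\varepsilon\|\Lambda\|$.

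Finally, since $\theta_\xi$ is unital completely positive we have $\theta_\xi(\Lambda)\in L^\infty(\G)$ with $\|\theta_\xi(\Lambda)\|\le\|\Lambda\|$, so the quasicentrality hypothesis yields $|S-(\omega_\eta\ast\omega_\zeta)(\theta_\xi(\Lambda))|=|(\omega_\zeta\ast\omega_\eta-\omega_\eta\ast\omega_\zeta)(\theta_\xi(\Lambda))|<\varepsilon\|\Lambda\|$, and the triangle inequality through $S$ produces the stated bound $3\varepsilon\|\Lambda\|$. I expect the main obstacle to be the step in which the first term is put into operator form and simplified: keeping the three legs and the two multiplicative unitaries $W$ and $W'$ straight, and selecting the correctly permuted instance of the pentagonal identity so that exactly one stray factor $W_{31}$ (the one controlled by the first hypothesis) survives, is the delicate part. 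The commutation $\theta_\xi\circ(\iota\otimes R_{\omega_\zeta})=R_{\omega_\zeta}\circ\theta_\xi$ and the two final estimates are then routine.
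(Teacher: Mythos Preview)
Your proof is correct and follows essentially the same route as the paper's: both arguments rewrite the two module actions as inner products on $L^2(\G)^{\otimes 3}$, apply the pentagonal identity from Lemma~\ref{lem:pentagonal} so that a single factor $W$ acting on $\zeta\otimes\xi$ is isolated and removed at cost $2\varepsilon\|\Lambda\|$, observe that the remaining expressions are exactly $(\omega_\zeta\ast\omega_\eta)(\theta_\xi(\Lambda))$ and $(\omega_\eta\ast\omega_\zeta)(\theta_\xi(\Lambda))$, and finish with the quasicentrality hypothesis. The only cosmetic difference is that you handle the right action algebraically via the commutation $\theta_\xi\circ(\iota\otimes R_{\omega_\zeta})=R_{\omega_\zeta}\circ\theta_\xi$, whereas the paper reaches the same conclusion at the Hilbert-space level using the second identity of Lemma~\ref{lem:pentagonal} together with the fact that $W'^*_{12}$ commutes with $\Lambda_{13}$.
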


\begin{proof}
Consider
\[
\left|\left(\omega_\zeta\cdot \omega_{{W'}^*(\xi\otimes\eta)} - \omega_{{W'}^*(\xi\otimes\eta)}\cdot\omega_\zeta\right)\left(\Lambda\right)\right|.
\]

We begin by rewriting in terms of the inner product in $L^2(\G)$.

\begin{align*}
&\left|\left(\omega_\zeta\cdot \omega_{{W'}^*(\xi\otimes\eta)} - \omega_{{W'}^*(\xi\otimes\eta)}\cdot\omega_\zeta\right)\left(\Lambda\right)\right|\\
&=\left|\left(\omega_\zeta\otimes \omega_{{W'}^*(\xi\otimes\eta)}\right)\left(\left(\Gamma\otimes i\right)(\Lambda)\right)\right.\\
&\qquad\left.- \left(\omega_{{W'}^*(\xi\otimes\eta)}\otimes\omega_\zeta\right)\left((i\otimes\Gamma)(\Lambda)\right)\right|\\
&=\left|\left(\omega_\zeta\otimes \omega_{{W'}^*(\xi\otimes\eta)}\right)\left(W_{12}^*\Lambda_{23} W_{12}\right)\right.\\
&\qquad\left. - \left(\omega_{{W'}^*(\xi\otimes\eta)}\otimes\omega_\zeta\right)\left(W^*_{23}\Lambda_{13}W_{23}\right)\right|\\
&=\left|\left\langle \left(\Lambda_{23}\right)W_{12}{W'}_{23}^*(\zeta\otimes \xi\otimes\eta),  W_{12}{W'}_{23}^*(\zeta\otimes\xi\otimes\eta)\right\rangle \right.\\
&\qquad -\left. \left\langle \left(\Lambda_{13}\right)W_{23}{W'}_{12}^*(\xi\otimes\eta\otimes\zeta),  W_{23}{W'}_{12}^*(\xi\otimes\eta\otimes\zeta) \right\rangle  \right|
\end{align*}
By using the pentagonal rule results of Lemma \eqref{lem:pentagonal} this becomes:
\begin{align*}
\ldots&=\left|\left\langle \left(\Lambda_{23}\right)W'^*_{23}W_{13}W_{12}(\zeta\otimes \xi\otimes \eta), W'^*_{23}W_{13}W_{12}(\zeta\otimes \xi\otimes \eta)\right\rangle \right.\\
&\qquad -\left. \left\langle \left( \Lambda_{13}\right)W'^*_{12}W'^*_{13}W_{23}(\xi\otimes\eta\otimes\zeta), W'^*_{12}W'^*_{13}W_{23}(\xi\otimes\eta\otimes\zeta) \right\rangle  \right|
\end{align*}

We now have a $W(\zeta\otimes\xi)$ in the first term which is, by assumption, within $\varepsilon$ of $(\zeta\otimes\xi)$.  We subtract and add an appropriate term (notice the change in order of the tensors) and apply the triangle inequality to get:
\begin{align*}
\ldots&\leq\left|\left\langle (\Lambda_{23})W'^*_{23}W_{13}W_{12}(\zeta\otimes \xi\otimes \eta), W'^*_{23}W_{13}W_{12}(\zeta\otimes \xi\otimes  \eta)\right\rangle \right.\\
&\qquad -\left.\left\langle (\Lambda_{23})W'^*_{23}W_{13}(\zeta\otimes \xi\otimes \eta), W'^*_{23}W_{13}(\zeta\otimes \xi\otimes \eta)\right\rangle  \right|\\
&\qquad + \left|\left\langle (\Lambda_{13})W'^*_{13}W_{23}(\xi\otimes\zeta\otimes \eta), W'^*_{13}W_{23}(\xi\otimes\zeta\otimes \eta)\right\rangle   \right.\\
&\qquad -\left. \left\langle (\Lambda_{13})W'^*_{12}W'^*_{13}W_{23}(\xi\otimes\eta\otimes\zeta), W'^*_{12}W'^*_{13}W_{23}(\xi\otimes\eta\otimes\zeta) \right\rangle   \right|
\end{align*}

By the first assumption, the first difference in absolute values is less than $2\varepsilon\|\Lambda\|$.  The final term involves a $\Lambda_{13}$ and a $W'^*_{12}$.  These commute because their first legs are (respectively) in $L^\infty(\G)$ and its commutant.  So $W'_{12}\Lambda_{13} W'^*_{12} = \Lambda_{13}$ and we have:

\begin{align*}
\left|\left(\omega_\zeta\cdot \omega_{{W'}^*(\xi\otimes\eta)} \right.\right. &-\left.\left.\omega_{{W'}^*(\xi\otimes\eta)}\cdot\omega_\zeta\right)\left(\Lambda\right)\right|
\\
&< 2\varepsilon \left\|\Lambda\right\| \\
&\qquad +  \left|\left\langle (\Lambda_{13})W'^*_{13}W_{23}(\xi\otimes\zeta\otimes \eta), W'^*_{13}W_{23}(\xi\otimes\zeta\otimes \eta)\right\rangle   \right.\\
&\qquad -\left. \left\langle (\Lambda_{13})W'^*_{13}W_{23}(\xi\otimes\eta\otimes\zeta), W'^*_{13}W_{23}(\xi\otimes\eta\otimes\zeta) \right\rangle   \right|\\
& = 2\varepsilon \left\|\Lambda\right\| + \left|\left\langle (1\otimes \theta_{\xi}(\Lambda))W(\zeta\otimes  \eta),  W(\zeta\otimes  \eta)\right\rangle  \right.\\
&\qquad -\left. \left\langle (1\otimes \theta_{\xi}(\Lambda))W( \eta\otimes \zeta),  W(\eta\otimes \zeta) \right\rangle  \right|\\
& = 2\varepsilon \left\|\Lambda\right\| + \left|\left\langle ( \Gamma( \theta_{\xi}(\Lambda)))(\zeta\otimes  \eta),  ( \zeta\otimes  \eta)\right\rangle  \right.\\
&\qquad -\left. \left\langle (\Gamma(\theta_{\xi}(\Lambda)))( \eta\otimes \zeta),  (\eta\otimes \zeta) \right\rangle  \right|\\
&= 2\varepsilon \left\|\Lambda\right\|+ \left|\left(\theta_{\xi}(\Lambda)\right)\left((\omega_{\eta}\ast\omega_{\zeta})\right)\right.\\
&\qquad - \left.\left(\theta_{\xi}(\Lambda)\right)\left(\omega_{\zeta}\ast\omega_\eta)\right)\right|\\
&< 2\varepsilon \left\|\Lambda\right\| + \varepsilon\left\|\left(\theta_{\hat{J}J\xi}(\Lambda)\right)\right\|\\
&=3\varepsilon \|\Lambda\| 
\end{align*}

\end{proof}

\begin{corollary}
\label{cor:main}
Let $\G$ be a strongly amenable and co-amenable locally compact quantum group.  Suppose that $(\xi_\alpha)_\alpha, (\eta_\beta)_\beta$ are (SA) and (CA) nets in $L^2(\G)_1$ (respectively).  Suppose also that either, for $\zeta\in L^2(\G)$ :

\begin{equation}
\label{cond:C1}\|W^*(\xi_\alpha\otimes \zeta) - {W'}^*(\xi_\alpha \otimes \zeta )\|\rightarrow_\alpha 0;
\end{equation}
or
\begin{equation}
\label{cond:C2}\|W^*(\zeta\otimes \eta_\beta) - {W'}^*(\zeta \otimes \eta_\beta )\|\rightarrow_\beta 0.
\end{equation}
Then $\omega_{{W'}^*(\xi_\alpha\otimes \eta_\beta)}$ has a subnet which is an operator bounded approximate diagonal for $L^1(\G)\hat{\otimes} L^1(\G)$ hence $L^1(\G)$ is operator amenable.
\end{corollary}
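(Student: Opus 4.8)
The plan is to verify the two defining properties (OBAD1) and (OBAD2) for the doubly-indexed net $x_{\alpha,\beta}:=\omega_{{W'}^*(\xi_\alpha\otimes\eta_\beta)}$ and then to extract a single subnet along which both hold. Throughout I identify $L^1(\G)\hat\otimes L^1(\G)$ with the predual of $L^\infty(\G)\otimes_{VN} L^\infty(\G)$, so that the projective tensor norm of any element is computed by testing against $\Lambda\in L^\infty(\G)\otimes_{VN} L^\infty(\G)$ with $\|\Lambda\|\le 1$. Since $W'$ is unitary and $\xi_\alpha,\eta_\beta$ are unit vectors, each $x_{\alpha,\beta}$ has norm one, so the net is uniformly bounded.

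For (OBAD1), fix a unit vector $\zeta\in L^2(\G)$ and $\varepsilon>0$. The net $(\xi_\alpha)$ satisfies (SA), so $\|W(\zeta\otimes\xi_\alpha)-(\zeta\otimes\xi_\alpha)\|<\varepsilon$ for $\alpha$ large; and since $(\eta_\beta)$ is a (CA) net, $(\omega_{\eta_\beta})$ is a two-sided bounded approximate identity, whence $\|\omega_\zeta\ast\omega_{\eta_\beta}-\omega_{\eta_\beta}\ast\omega_\zeta\|_1<\varepsilon$ for $\beta$ large. Both hypotheses of Theorem \ref{thm:33} therefore hold once $(\alpha,\beta)$ is large in the product order, and the conclusion of that theorem, combined with the predual-norm identification, gives $\|\omega_\zeta\cdot x_{\alpha,\beta}-x_{\alpha,\beta}\cdot\omega_\zeta\|\le 3\varepsilon$ eventually. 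Thus $\lim_{(\alpha,\beta)}\|\omega_\zeta\cdot x_{\alpha,\beta}-x_{\alpha,\beta}\cdot\omega_\zeta\|=0$ for every unit vector $\zeta$. Because the map $a\mapsto a\cdot x_{\alpha,\beta}-x_{\alpha,\beta}\cdot a$ is linear in $a$ and uniformly bounded in norm by $2$, and the linear span of the vector functionals $\omega_\zeta$ is dense in $L^1(\G)$, a standard $3\varepsilon$-argument upgrades this to $\lim_{(\alpha,\beta)}\|a\cdot x_{\alpha,\beta}-x_{\alpha,\beta}\cdot a\|=0$ for all $a\in L^1(\G)$.

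For (OBAD2), unwinding $\Gamma_*$ and using $\Gamma(x)=W^*(1\otimes x)W$ gives $\Gamma_*(x_{\alpha,\beta})(x)=\langle(1\otimes x)W{W'}^*(\xi_\alpha\otimes\eta_\beta),\,W{W'}^*(\xi_\alpha\otimes\eta_\beta)\rangle$, whereas the analogous functional for $\xi_\alpha\otimes\eta_\beta$ is exactly $\omega_{\eta_\beta}$. As a vector functional depends quadratically on its vector, $\|\Gamma_*(x_{\alpha,\beta})-\omega_{\eta_\beta}\|\le 2\|W{W'}^*(\xi_\alpha\otimes\eta_\beta)-(\xi_\alpha\otimes\eta_\beta)\|=2\|W^*(\xi_\alpha\otimes\eta_\beta)-{W'}^*(\xi_\alpha\otimes\eta_\beta)\|$, the last equality by unitarity of $W$. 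Hence $\|\Gamma_*(x_{\alpha,\beta})a-a\|\le 2\|a\|\,\|W^*(\xi_\alpha\otimes\eta_\beta)-{W'}^*(\xi_\alpha\otimes\eta_\beta)\|+\|\omega_{\eta_\beta}a-a\|$. Under \eqref{cond:C2}, fixing $\alpha$ and letting $\beta\to\infty$ kills the first term (take $\zeta=\xi_\alpha$) while the b.a.i.\ property kills the second, so for each fixed $\alpha$ one has $\lim_\beta\|\Gamma_*(x_{\alpha,\beta})a-a\|=0$; under \eqref{cond:C1} one argues symmetrically, fixing $\beta$ and first letting $\alpha\to\infty$ to remove the first term, then letting $\beta\to\infty$ for the second.

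The main obstacle, and the reason the statement only claims a subnet, is that these two convergences occur with respect to incompatible orderings: (OBAD1) holds along the full product net (both $\alpha,\beta$ large), whereas under \eqref{cond:C2} the quantity in (OBAD2) is only controlled as an iterated limit in $\beta$ for each fixed $\alpha$, and the operator $W^*-{W'}^*$ need not act small on $\xi_\alpha\otimes\eta_\beta$ uniformly as $\alpha$ grows. I would reconcile them by a Kelley-type subnet construction: index a subnet by triples $(\Phi,\varepsilon,(\alpha_0,\beta_0))$ with $\Phi$ a finite set of unit vectors, $\varepsilon>0$, and $(\alpha_0,\beta_0)\in A\times B$, directed by inclusion of $\Phi$, decrease of $\varepsilon$, and the product order on $(\alpha_0,\beta_0)$. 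For each such triple, the product-net convergence of (OBAD1) yields a threshold past which the commutator estimate is below $\varepsilon$ on $\Phi$; fix an admissible $\alpha$ beyond this threshold and beyond $\alpha_0$, then use the fixed-$\alpha$ convergence of (OBAD2) to pick $\beta\ge\beta_0$ making the approximate-identity estimate below $\varepsilon$ on $\Phi$. The inclusion of $(\alpha_0,\beta_0)$ guarantees cofinality, so this defines a genuine subnet, and passing from the finite test set $\Phi$ to arbitrary $a\in L^1(\G)$ by the same density-and-uniform-bound argument as in (OBAD1) shows this subnet satisfies both (OBAD1) and (OBAD2). Hence it is an operator bounded approximate diagonal and $L^1(\G)$ is operator amenable.
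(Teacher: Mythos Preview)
Your argument is correct and follows the same route as the paper's proof: invoke Theorem~\ref{thm:33} for (OBAD1), reduce (OBAD2) to $\|W^*(\xi_\alpha\otimes\eta_\beta)-{W'}^*(\xi_\alpha\otimes\eta_\beta)\|\to 0$ combined with the fact that $(\omega_{\eta_\beta})$ is a two-sided b.a.i., and then extract a subnet by choosing $\beta$ after $\alpha$ (under \eqref{cond:C2}) or $\alpha$ after $\beta$ (under \eqref{cond:C1}). You have simply made explicit the (OBAD2) estimate and the subnet construction that the paper leaves to the reader; one small point to tidy is that in your Kelley-type selection you should also require the chosen $\beta$ to exceed the $\beta$-component of the (OBAD1) threshold, but this is trivially compatible with the other constraints.
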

\begin{proof}

Runde \cite{runde} showed that $\omega_{\eta_\beta}$ is a two sided bounded approximate identity for $L^1(\G)$.

For fixed $\varepsilon>0$, and $\zeta\in L^2(\G)_1$ there exist (by theorem \eqref{thm:33}) $\alpha_0$ and $\beta_0$ such that
\[
\left|(\omega_\zeta\cdot \omega_{{W'}^*(\xi_\alpha\otimes\eta_\beta)} - \omega_{{W'}^*(\xi_\alpha\otimes\eta_\beta)}\cdot\omega_\zeta)\right|<\varepsilon 
\]
for any $\alpha\succeq\alpha_0$ and $\beta\succeq\beta_0$.

Now, if  \eqref{cond:C1} is true, there is an $\alpha_1\succeq \alpha_0$ such that for any $\alpha\succeq\alpha_1$
\[
\|{W'}^*\xi_\alpha\otimes\eta_{\beta_0} - W^*\xi_\alpha\otimes\eta_{\beta_0} \|<\varepsilon.
\]

Since $\varepsilon$ and $\zeta$ are arbitrary, by choosing $\alpha$ after $\beta$ there is a subnet of $(\omega_{{W'}^*\xi_\alpha\otimes\eta_\beta})$ that is an operator bounded approximate diagonal for $L^1(\G)$.

If, instead, condition \eqref{cond:C2} is true, then an operator bounded approximate diagonal can be found by choosing $\beta$ after $\alpha$.
\end{proof}

\begin{remark}
If $(\xi_\alpha)_\alpha$ is a (SA) net that also satisfies condition \eqref{cond:C1} then it generates a bounded approximate identity for $L^1(\hat{\G})$, $\hat{\omega}_{\xi_\alpha}$.  This b.a.i. also satisfies
\[
\|\hat{\omega}_{\hat{W}^{op*}\hat{W} \zeta\otimes\xi_\alpha} - \hat{\omega}_{\zeta\otimes\xi_\alpha}\|\rightarrow 0
\]
for any state in $L^1(\hat{\G})$, $\hat{\omega}_\zeta$.  This property is related to the notion of quasicentral bounded approximate identities for locally compact groups as studied in \cite{losertrindler}, \cite{stokke} and others.
\end{remark}

\section{Dual Version: Operator Amenability of $L^1(\hat{\G})$}

For a locally compact group $G$, it is well known (eg \cite{effrosruan, paterson}) that  $G$ is amenable if and only if $L^1(G)$ is (operator) amenable if and only if $L^1(\hat{G})=A(G)$ is operator amenable.  It has been conjectured that $L^1(\G)$ is operator amenable if and only if $L^1(\hat{\G})$ is as well.  The conditions (CA) and (SA) are naturally dual to each other, which provides some additional justification for this conjecture.  However, recently Caspers, Lee, and Ricard \cite{caspersleericard} have shown that these two conditions are not sufficient for operator amenability of $L^1(\G)$.  In this section, we convert the main result of section 3 to its natural dual.  The dual versions of condition \eqref{cond:C1} and \eqref{cond:C2} interchange the multiplicative unitary for the commutant quantum group  ($\hat{W}'$) with that of the quantum group with opposite co-multiplication ($\widehat{W^{op}}$).  
To create a more useful dual version, one would hope that if $W'$ is approximately $W$ acting on some net in the fashion of (3.1) or (3.2) then $W^{op}$ is approximately $W$ acting on some other net as (4.3) or (4.4).  In his proof of the 
equivalence of amenability of $L^1(G)$ and operator amenability of $A(G)$, Ruan \cite{ruan} was able to use the fact that, in the group case, $W=W'$ to construct a net that worked appropriately with $W$ and $W^{op}$.  Such a nice result does not seem achievable in the general LCQG case, but we are able to make some progress in this direction.

We use an approach motivated in part by a result of Losert and Rindler \cite{losertrindler} whereby they construct, for an amenable group, an asymptotically central approximate identity.  We are able to combine the two nets of elements of $L^2(\G)$ (SA net and CA net) to create a bounded approximate identity which also has an approximately central property as well.

\begin{corollary}

Suppose that $(\xi_\alpha)_\alpha, (\eta_\beta)_\beta$ are nets in $L^2(\G)_1$ such that for every $\zeta\in L^2(\G)$ :

\begin{align}
\label{conditionAdual}
\| W(\zeta\otimes \xi_\alpha) - (\zeta\otimes \xi_\alpha)\|&\rightarrow 0;\\
\label{conditionBdual}
\|W(\eta_\beta\otimes\zeta)-(\eta_\beta\otimes\zeta)\|&\rightarrow 0;\\
\|W(\zeta\otimes\eta_\beta ) - {W}^{op}(\zeta\otimes\eta_\beta)\|&\rightarrow 0; \text{ or} \\
\|W( \xi_\alpha\otimes\zeta) - {W}^{op}(\xi_\alpha\otimes\zeta)\|&\rightarrow 0.
\end{align}
Then $\omega_{\widehat{W^{op}} ^* (\xi_\alpha\otimes \eta_\beta)}$ is a operator bounded approximate diagonal for $L^1(\hat{\G})\hat{\otimes} L^1(\hat{\G})$ hence $L^1(\hat{\G})$ is operator amenable.
\end{corollary}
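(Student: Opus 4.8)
The plan is to deduce this statement from the main result of Section~3 (Corollary~\ref{cor:main}) applied to the \emph{dual} quantum group $\hat{\G}$, rather than re-running the computation of Theorem~\ref{thm:33} from scratch in the dual. Since $\hat{\G}$ is itself a locally compact quantum group whose GNS Hilbert space is again identified with $L^2(\G)$, Corollary~\ref{cor:main} is immediately available for it once its hypotheses are verified. The entire task is therefore to translate the four hypotheses \eqref{conditionAdual}--(4.4), which are phrased in terms of $W$ and $W^{\mathrm{op}}$, into the language of the multiplicative unitaries $\hat{W}$ and $\hat{W}'$ of $\hat{\G}$, using the unitary identities collected in Proposition~\ref{prop:list}.

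First I would rewrite the strong amenability and co-amenability hypotheses. Using $\hat{W}=\Sigma W^*\Sigma$ together with the unitarity of $\Sigma$ and $W$, a direct computation gives, for every $\zeta$,
\[
\|\hat{W}(\xi_\alpha\otimes\zeta)-\xi_\alpha\otimes\zeta\|=\|W(\zeta\otimes\xi_\alpha)-\zeta\otimes\xi_\alpha\|,\qquad
\|\hat{W}(\zeta\otimes\eta_\beta)-\zeta\otimes\eta_\beta\|=\|W(\eta_\beta\otimes\zeta)-\eta_\beta\otimes\zeta\|.
\]
Hence \eqref{conditionAdual} says precisely that $(\xi_\alpha)$ is a (CA) net for $\hat{\G}$, and \eqref{conditionBdual} says that $(\eta_\beta)$ is an (SA) net for $\hat{\G}$; equivalently, invoking Pontryagin duality $\hat{\hat{\G}}=\G$, these express the co-amenability and strong amenability of $\hat{\G}$. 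Thus $\hat{\G}$ satisfies the standing assumptions of Corollary~\ref{cor:main}, but with the roles of the two nets interchanged relative to the $\G$-picture.

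Next I would match the quasicentral hypotheses. By Proposition~\ref{prop:list} the commutant multiplicative unitary of $\hat{\G}$ is $\hat{W}'=\widehat{W^{\mathrm{op}}}$, and since $\widehat{W^{\mathrm{op}}}=\Sigma(W^{\mathrm{op}})^*\Sigma$ one gets $(\hat{W}')^*=\Sigma W^{\mathrm{op}}\Sigma$ while $\hat{W}^*=\Sigma W\Sigma$. Conjugating by $\Sigma$ converts the third and fourth hypotheses into
\[
\|\hat{W}^*(\eta_\beta\otimes\zeta)-(\hat{W}')^*(\eta_\beta\otimes\zeta)\|\to0
\quad\text{and}\quad
\|\hat{W}^*(\zeta\otimes\xi_\alpha)-(\hat{W}')^*(\zeta\otimes\xi_\alpha)\|\to0,
\]
which are exactly conditions \eqref{cond:C1} and \eqref{cond:C2} of Corollary~\ref{cor:main} for $\hat{\G}$ (recall that there $\eta_\beta$ is the (SA) net and $\xi_\alpha$ the (CA) net). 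So the assumption ``(4.3) or (4.4)'' is literally ``\eqref{cond:C1} or \eqref{cond:C2} for $\hat{\G}$''. Feeding all of this into Corollary~\ref{cor:main} produces an operator bounded approximate diagonal for $L^1(\hat{\G})$ of the form $\omega_{(\hat{W}')^*(\eta_\beta\otimes\xi_\alpha)}=\omega_{\widehat{W^{\mathrm{op}}}^*(\eta_\beta\otimes\xi_\alpha)}$, yielding operator amenability of $L^1(\hat{\G})$.

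I expect the only real friction to be bookkeeping rather than any genuinely new idea: each step is an application of the identities in Proposition~\ref{prop:list} together with $\hat{\hat{\G}}=\G$, and all of the substantive analytic content already lives in Theorem~\ref{thm:33}. The one point demanding care is the order of the simple tensor in the resulting diagonal. Because the passage from $\G$ to $\hat{\G}$ is implemented by the flip $\Sigma$, the (SA) net for $\hat{\G}$ is $\eta_\beta$ and the (CA) net is $\xi_\alpha$, so Corollary~\ref{cor:main} delivers the vector $(\hat{W}')^*(\eta_\beta\otimes\xi_\alpha)$ with $\eta_\beta$ in the first leg. One must track the factors through every occurrence of $\Sigma$ so that the diagonal is recorded in the correct order, and in particular confirm whether the form $\widehat{W^{\mathrm{op}}}^*(\xi_\alpha\otimes\eta_\beta)$ stated above is as intended or an inadvertent transposition.
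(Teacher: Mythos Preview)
Your proposal is correct and matches the paper's own proof almost verbatim: both simply apply Corollary~\ref{cor:main} to $\hat{\G}$, using $\hat{W}=\Sigma W^*\Sigma$ and $\hat{W}'=\widehat{W^{\mathrm{op}}}=\Sigma(W^{\mathrm{op}})^*\Sigma$ to convert (4.1)--(4.4) into (SA), (CA), \eqref{cond:C1}, \eqref{cond:C2} for the dual. Your closing observation is also on point---carrying the flip through carefully, the diagonal produced by Corollary~\ref{cor:main} is $\omega_{(\hat{W}')^*(\eta_\beta\otimes\xi_\alpha)}$, so the stated order $\xi_\alpha\otimes\eta_\beta$ appears to be a transposition that the paper's proof does not address.
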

\begin{proof}
This is a consequence of Corollary \eqref{cor:main}, but rephrased for the dual quantum group.  

If $(\xi_\alpha)_\alpha, (\eta_\beta)_\beta$ are (SA) and (CA) nets for $\G$ in $L^2(\G)_1$ (respectively) then they are also (CA) and (SA) nets for $\hat{\G}$ (respectively).  

Furthermore
\begin{equation}
\label{conditionhatC1}\hat{W}^*(\xi\otimes \zeta) =\sigma \left(W(\zeta\otimes\xi)\right)
\end{equation}
and
\begin{equation}
{\hat{W}^{'*}}(\xi \otimes \zeta ) =\sigma\left( W^{op}(\zeta\otimes\xi)\right)
\end{equation}
where $\sigma$ swaps the coordinates in the tensor product.

The results follow from corollary \eqref{cor:main}.
\end{proof}

We use several more lemmas that involve manipulating the multiplicative unitary operators.

\begin{lemma}
\label{lem:qc}
\[
{W'}_{13}^{\text{op}*}W'_{13}{W'}_{23}^{\text{op}*}W_{23} = {W'}_{12}^{*}{W'}_{23}^{\text{op}*}W'_{23}W'_{12}{W'}_{23}^*W_{23}
\]
\end{lemma}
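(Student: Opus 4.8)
The plan is to remove the common factor on the right and then recognize what survives as a mildly deformed pentagon equation. Since $W_{23}$ is unitary and is the rightmost factor on both sides, right multiplication by $W_{23}^{*}$ reduces the claim to
\[
{W'}_{13}^{\text{op}*}W'_{13}{W'}_{23}^{\text{op}*}={W'}_{12}^{*}{W'}_{23}^{\text{op}*}W'_{23}W'_{12}{W'}_{23}^{*}.
\]
The operator $W'$ is itself a multiplicative unitary (indeed its pentagon equation follows by conjugating the pentagon equation for $W$ in Proposition \ref{prop:list} by $J\otimes J\otimes J$, since a modular conjugation on the third leg commutes with $W_{12}$), so I may use $W'_{23}W'_{12}{W'}_{23}^{*}=W'_{12}W'_{13}$ to collapse the last three factors on the right. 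After moving one copy of $W'_{13}$ across, the identity to prove becomes the mixed relation
\[
{W'}_{12}^{*}\,{W'}_{23}^{\text{op}*}\,W'_{12}={W'}_{13}^{\text{op}*}\,W'_{13}\,{W'}_{23}^{\text{op}*}\,{W'}_{13}^{*}.
\]

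The key observation is that replacing $W'^{\text{op}}$ by $W'$ throughout turns this mixed relation into an instance of the pentagon equation for $W'$, both sides then equalling ${W'}_{23}^{*}{W'}_{13}^{*}$. Thus the mixed relation is only a one-factor deformation of the pentagon rule, and the task is to show that inserting $W'^{\text{op}}$ in place of $W'$ on the appropriate legs preserves the identity.

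To treat the genuine case I would imitate the proof of Lemma \ref{lem:pentagonal} and rewrite every factor in terms of $W$ and the modular conjugations. Composing the commutant and opposite constructions of Proposition \ref{prop:list}, each of which is implemented by conjugating $W$ with a tensor square of a modular conjugation, gives
\[
W'^{\text{op}}=(\hat{J}\otimes\hat{J})(J\otimes J)\,W\,(J\otimes J)(\hat{J}\otimes\hat{J}),
\]
while $W'=(J\otimes J)W(J\otimes J)$. Here $(\hat{J}\otimes\hat{J})(J\otimes J)=\hat{J}J\otimes\hat{J}J$ and $(J\otimes J)(\hat{J}\otimes\hat{J})=J\hat{J}\otimes J\hat{J}$ differ only by the scalar $\nu^{\frac{i}{4}}$ coming from $\hat{J}J=\nu^{\frac{i}{4}}J\hat{J}$, and this scalar is washed out because it appears symmetrically on the two sides of $W$. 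Establishing this explicit form (in particular identifying the dual modular conjugation of the commutant quantum group with $\hat{J}$) is the one structural input I would need to pin down against \cite{kv2003}.

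Once each of the leg-factors in the mixed relation is written this way, the proof is a bookkeeping exercise using the principle, exploited repeatedly in Lemma \ref{lem:pentagonal} and Theorem \ref{thm:33}, that a modular conjugation attached to one leg commutes past any operator acting only on the other legs. Pushing all of the single-leg conjugations to the outside and matching them on the two sides, what remains is precisely the pentagon equation $W_{12}W_{13}W_{23}=W_{23}W_{12}$ for $W$. I expect the main obstacle to be exactly this antilinear bookkeeping: correctly recording which of $J$ and $\hat{J}$ sits on each of the three legs and absorbing the scalar $\nu^{\frac{i}{4}}$, since the genuine mathematical content is only the pentagon rule together with the leg-commutation of the modular conjugations.
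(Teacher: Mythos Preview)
Your reduction in the first two steps is correct and matches the paper: after cancelling $W_{23}$ and applying the pentagon for $W'$, both approaches arrive at the identity
\[
{W'}_{12}^{*}\,{W'}_{23}^{\text{op}*}\,W'_{12}\;=\;{W'}_{13}^{\text{op}*}\,W'_{13}\,{W'}_{23}^{\text{op}*}\,{W'}_{13}^{*}.
\]
From here the two arguments diverge, and you miss the structural observation that makes this step trivial. The paper notes that $W'\in L^\infty(\G)'\otimes_{VN}L^\infty(\hat{\G})$ while $W'^{\text{op}}\in L^\infty(\G)'\otimes_{VN}L^\infty(\hat{\G})'$; hence the third legs of $W'_{13}$ and ${W'}^{\text{op}*}_{23}$ commute. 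The right-hand side above therefore collapses instantly to ${W'}_{13}^{\text{op}*}{W'}_{23}^{\text{op}*}$. What remains,
\[
{W'}_{13}^{\text{op}*}{W'}_{23}^{\text{op}*}\;=\;{W'}_{12}^{*}\,{W'}_{23}^{\text{op}*}\,W'_{12},
\]
is then obtained by conjugating the $W'$-pentagon ${W'}_{12}^{*}W'_{23}W'_{12}=W'_{13}W'_{23}$ with $\hat{J}J$ on leg $3$ alone, since this conjugation turns $W'$ into $W'^{\text{op}*}$ on the legs touching $3$ while commuting past the $12$-factors.

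Your alternative, rewriting every factor in terms of $W$, $J$, $\hat{J}$ and hoping to distil a single pentagon, is in principle viable but is left as an outline: you explicitly defer the identification of $W'^{\text{op}}$ to \cite{kv2003}, and the antilinear bookkeeping for the four-factor right-hand side (pattern $A_{13}B_{13}C_{23}B_{13}^{*}$) is not carried out. More to the point, it is unnecessary. The commutation of $W'_{13}$ with $W'^{\text{op}*}_{23}$ is the missing idea; once you see it, the ``mildly deformed pentagon'' undeforms immediately and no leg-by-leg conjugation tracking is required.
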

\begin{proof}
Because $W'\in L^\infty(\G)'\otimes_{VN}L^\infty(\hat{\G})$ and ${W'}^{\text{op}}\in L^\infty(\G)'\otimes_{VN}L^\infty(\hat{\G})'$, it follows that $W'_{13}$ and ${W'}^{\text{op}*}_{23}$ commute.

Now, by the pentagonal equation:
\[W'_{13}W_{23}  = W'_{13}W'_{23}{W'}^*_{23}W_{23} = {W'}_{12}^{*}W'_{23}W'_{12}{W'}^*_{23}W_{23}\]
and similarly 
\[
{W'}_{13}^{\text{op}*}{W'}_{23}^{\text{op}*}=\hat{J}_3 J_3 W'_{13}W'_{23} J_3 \hat{J}_3 = \hat{J}_3 J_3   {W'}_{12}^{*}W'_{23} W'_{12}   J_3 \hat{J}_3 = 
{W'}_{12}^{*}{W'}_{23}^{\text{op}*}W'_{12}.      
\]

Combining the above two equations, we get the desired result:
\begin{align*}
{W'}_{13}^{\text{op}*}W'_{13}{W'}_{23}^{\text{op}*}W_{23} &={W'}_{13}^{\text{op}*}{W'}_{23}^{\text{op}*}W'_{13}W_{23}\\
&={W'}_{12}^{*}{W'}_{23}^{\text{op}*}W'_{12}     {W'}_{12}^{*}W'_{23}W'_{12}{W'}^*_{23}W_{23}\\
&= {W'}_{12}^{*}{W'}_{23}^{\text{op}*}W'_{23}W'_{12}{W'}^*_{23}W_{23}
\end{align*}
\end{proof}

\begin{lemma}
\label{lem:bai}
\[
W_{23}W_{12}{{W}'}_{12}^{\text{op}*} = W_{12}{{W}'}_{12}^{\text{op}*}W_{13}W_{23}{W'}_{13}^*
\]
\end{lemma}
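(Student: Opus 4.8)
The plan is to peel the identity down to a single ``mixed pentagon'' relation by repeated use of the pentagon equation and leg commutations, and then to recognise that relation as a rearrangement of the pentagon equation for $W'$.

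First I would use the pentagon equation $W_{12}W_{13}W_{23}=W_{23}W_{12}$ from Proposition \ref{prop:list} to rewrite the left-hand side as $W_{12}W_{13}W_{23}{W'}^{\text{op}*}_{12}$ and cancel the common left factor $W_{12}$. Next, since ${W'}^{\text{op}}\in L^\infty(\G)'\otimes_{VN}L^\infty(\hat\G)'$ while $W\in L^\infty(\G)\otimes_{VN}L^\infty(\hat\G)$, the operators ${W'}^{\text{op}}_{12}$ and $W_{13}$ overlap only in the first leg, where their components lie in $L^\infty(\G)'$ and $L^\infty(\G)$; hence they commute and I can strip off a common $W_{13}$. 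This reduces the whole statement to the key relation
\[
W_{23}{W'}^{\text{op}*}_{12}={W'}^{\text{op}*}_{12}W_{23}{W'}^*_{13},
\]
that is, to computing the conjugate ${W'}^{\text{op}}_{12}W_{23}{W'}^{\text{op}*}_{12}$.

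To prove this I would rewrite ${W'}^{\text{op}}$ through the modular conjugations exactly as in the proof of Lemma \ref{lem:qc}: writing $u=\hat{J}J$ (a unitary, with $u^*=u^{-1}=J\hat{J}$), one has ${W'}^{\text{op}*}_{12}=(1\otimes u\otimes 1)W'_{12}(1\otimes u^*\otimes 1)$, the conjugation acting on the middle leg. The crucial computational input is that conjugating the $L^\infty(\G)$-leg of $W$ by $u$ turns $W$ into ${W'}^*$: combining $W^*=(\hat{J}\otimes J)W(\hat{J}\otimes J)$ and $W'=(J\otimes J)W(J\otimes J)$ from Proposition \ref{prop:list} yields $u_2^{-1}W_{23}u_2={W'}^*_{23}$, equivalently $W_{23}u_2=u_2{W'}^*_{23}$. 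Substituting the expression for ${W'}^{\text{op}*}_{12}$ into both sides of the key relation and pushing $u$ through $W_{23}$ by this identity — together with the trivial commutation of $u$ (on leg $2$) with ${W'}^*_{13}$ (on legs $1,3$) — factors the conjugation by $u$ outside every term and leaves precisely
\[
{W'}^*_{23}W'_{12}=W'_{12}{W'}^*_{23}{W'}^*_{13},
\]
which is a rearrangement of the pentagon equation for $W'$.

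The main obstacle is the careful bookkeeping of the single-leg modular conjugations. Unlike the two-leg conjugations defining $W'$, $W^{\text{op}}$ and ${W'}^{\text{op}}$, the identity $u_2^{-1}W_{23}u_2={W'}^*_{23}$ pairs a $\hat{J}$ coming from $W^*$ with a $J$ coming from $W'$, so I must make sure that only $J$'s and $\hat{J}$'s supported on the \emph{same} leg are ever amalgamated; the scaling relation $\hat{J}J=\nu^{i/4}J\hat{J}$ contributes only a unimodular scalar that cancels under conjugation and so plays no role. Once $u$ has been isolated on the outside of every factor, the statement collapses to the pentagon relation for $W'$, which is where the genuine content lies.
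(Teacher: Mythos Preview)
Your proposal is correct and follows essentially the same route as the paper: apply the pentagon equation for $W$, use that the first legs of $W_{13}$ and ${W'}^{\text{op}*}_{12}$ lie in $L^\infty(\G)$ and $L^\infty(\G)'$ respectively so that they commute, and reduce to the mixed relation $W_{23}{W'}^{\text{op}*}_{12}={W'}^{\text{op}*}_{12}W_{23}{W'}^*_{13}$, which is then verified by conjugating with $\hat{J}_2J_2$ to convert it into the pentagon equation for $W'$. The only cosmetic difference is ordering: the paper performs the $W_{13}$--${W'}^{\text{op}*}_{12}$ commutation after the conjugation step rather than before, but the two arguments are otherwise identical.
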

\begin{proof}
By the pentagonal rule we immediately get:
\[
W_{23}W_{12}{{W}'}_{12}^{\text{op}*} = W_{12}W_{13}W_{23}{{W}'}_{12}^{\text{op}*}
\]

By introducing factors of $J$ and $\hat{J}$ in the second leg we get:
\begin{align*}
W_{12}W_{13}\hat{J}_2J_2{W'}^*_{23}W'_{12}J_2\hat{J}_2 
&=W_{12}W_{13}\hat{J}_2J_2W'_{12}{W'}^*_{23}{W'}^*_{13}J_2\hat{J}_2\\
&=W_{12}W_{13}{W'}_{12}^{\text{op}*}W_{23}{W'}_{13}^*
\end{align*}

Finally, note that the first legs of $W$ and ${W'}^{\text{op}*}$ commute, so we get the desired result:
\[
W_{23}W_{12}{{W}'}_{12}^{\text{op}*} = W_{12}{{W}'}_{12}^{\text{op}*}W_{13}W_{23}{W'}_{13}^*
\]
\end{proof}

\begin{theorem}

Suppose we have nets $(\xi_\alpha)_\alpha$ satisfying conditions (SA)  and \eqref{cond:C1} and $(\eta_\beta)_\beta$ satisfying (CA) and \eqref{cond:C2}.  
For each $\alpha, \beta$, consider the element $u_{\alpha, \beta}$ of $L^1(\G)$ given by:
\[
u_{\alpha, \beta} = (1\otimes \imath) \omega_{WW^{\text{op}'*}\xi_\alpha\otimes\eta_\beta}.
\]

Then there exists a subnet $u_\gamma = u_{\alpha_\gamma, \beta_\gamma}$ which is a bounded approximate identity for $L^1(\G)$ 
and satisfies the following quasi-central condition:
\begin{equation}
\label{cond:qc}
(\omega_\zeta\otimes u_\gamma)({W'}^*W^{\text{op}'}\Lambda W^{\text{op}'*}{W'} - \Lambda)\rightarrow 0
\end{equation}
for $\zeta\in L^2(\G)$ and $\Lambda\in L^\infty(\G)\otimes_{VN}L^\infty(\G)$.
\end{theorem}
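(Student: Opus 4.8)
The plan is to prove the two assertions separately --- that the $u_\gamma$ form a bounded approximate identity, and that they satisfy the asymptotic centrality \eqref{cond:qc} --- each by reducing to an inner-product computation on $L^2(\G)\otimes L^2(\G)\otimes L^2(\G)$, and then to fuse the two one-parameter limits into a single subnet. Observe first that, since $W$ and $W^{\text{op}'}$ are unitary and the $\xi_\alpha,\eta_\beta$ are unit vectors, each $u_{\alpha,\beta}$ is a normal state with $\|u_{\alpha,\beta}\|=u_{\alpha,\beta}(1)=1$; the net is therefore bounded and it remains only to check the two limiting relations. Writing $\Phi_{\alpha\beta}=WW^{\text{op}'*}(\xi_\alpha\otimes\eta_\beta)$, one has $u_{\alpha,\beta}(X)=\langle (1\otimes X)\Phi_{\alpha\beta},\Phi_{\alpha\beta}\rangle$ for $X\in L^\infty(\G)$.

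For the approximate identity I would expand $(u_{\alpha,\beta}\ast\omega_\zeta)(X)=(u_{\alpha,\beta}\otimes\omega_\zeta)(\Gamma(X))$ via $\Gamma(X)=W^*(1\otimes X)W$ as a threefold inner product, placing $\xi_\alpha,\eta_\beta,\zeta$ on legs $1,2,3$; this gives
\[
(u_{\alpha,\beta}\ast\omega_\zeta)(X)=\langle X_3\,W_{23}W_{12}W^{\text{op}'*}_{12}\Theta,\;W_{23}W_{12}W^{\text{op}'*}_{12}\Theta\rangle,\qquad \Theta=\xi_\alpha\otimes\eta_\beta\otimes\zeta.
\]
Lemma~\eqref{lem:bai} rewrites $W_{23}W_{12}W^{\text{op}'*}_{12}$ as $W_{12}W^{\text{op}'*}_{12}W_{13}W_{23}{W'}^{*}_{13}$, and because $X_3$ commutes with the unitary $W_{12}W^{\text{op}'*}_{12}$ (disjoint legs) that outer factor cancels, leaving $\langle X_3\,W_{13}W_{23}{W'}^{*}_{13}\Theta,\,W_{13}W_{23}{W'}^{*}_{13}\Theta\rangle$. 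I would then use \eqref{cond:C1} to replace ${W'}^{*}_{13}$ by $W^{*}_{13}$ on $\Theta$, and collapse the surviving word on the third leg to the identity using the pentagon identities of Lemma~\eqref{lem:pentagonal} together with (CA) for $\eta_\beta$ and \eqref{cond:C2}, obtaining $(u_{\alpha,\beta}\ast\omega_\zeta)(X)\to\langle X\zeta,\zeta\rangle=\omega_\zeta(X)$ with an error controlled by $\|X\|$ times the defining quantities. The reversed relation $\omega_\zeta\ast u_{\alpha,\beta}\to\omega_\zeta$ follows symmetrically, using that a (CA)/(SA) net is adapted to both $W$ and $W^{\text{op}}$.

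For \eqref{cond:qc} I would write $(\omega_\zeta\otimes u_\gamma)({W'}^*W^{\text{op}'}\Lambda W^{\text{op}'*}{W'}-\Lambda)$ as a difference of two threefold inner products, with the sliced leg of $u_\gamma$ on one coordinate and $\Lambda$ together with the conjugators ${W'},W^{\text{op}'}$ on the other two. The whole difference is then controlled by comparing the vector $W^{\text{op}'*}{W'}WW^{\text{op}'*}$ (in leg notation) applied to $\xi_\alpha\otimes\zeta\otimes\eta_\beta$ against the same vector with the conjugating block removed, and Lemma~\eqref{lem:qc} is precisely the operator identity that recasts this word into a product whose factors can be linearized one at a time: \eqref{cond:C1} and \eqref{cond:C2} trade the commutant unitaries ${W'},W^{\text{op}'}$ for $W,W^{\text{op}}$ on $\xi_\alpha$ and $\eta_\beta$, and (SA)/(CA) then make the remaining copies of $W$ act trivially on $\xi_\alpha$ (second leg) and on $\eta_\beta$ (first leg). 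Telescoping these replacements shows the two inner products agree in the limit, which is \eqref{cond:qc}.

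Both families of estimates have the shape: given $\varepsilon$ and a finite set of test data there are $\alpha_0,\beta_0$ past which the bound holds, but the approximate-identity step wants $\alpha$ taken after $\beta$ (to activate \eqref{cond:C1}) while centrality constrains both; I would therefore run the iterated-limit/diagonal device from the proof of Corollary~\eqref{cor:main}, indexing a subnet $u_\gamma=u_{\alpha_\gamma,\beta_\gamma}$ by (tolerance, finite test family), choosing $\beta_\gamma$ cofinally and $\alpha_\gamma$ large relative to it so that every required inequality holds along $\gamma$; since $\|u_{\alpha,\beta}\|=1$ throughout, the subnet is a genuine bounded approximate identity and simultaneously satisfies \eqref{cond:qc}. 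The main obstacle is twofold. Computationally, residual words such as $W_{13}W_{23}W^{*}_{13}$ (and the analogue coming from Lemma~\eqref{lem:qc}) do not simplify under the pentagon alone, so the cancellations become legal only once one keeps careful track of which leg lies in $L^\infty(\G)$, in $L^\infty(\G)'$, or in $L^\infty(\hat{\G})$, and feeds in (SA), (CA), \eqref{cond:C1}, \eqref{cond:C2} at exactly the legs where each is available. Structurally, the delicate point is producing a single subnet that is at once a bounded approximate identity and quasicentral, since the two properties are separate iterated limits forcing competing orders on $\alpha$ and $\beta$; the diagonal argument must be arranged so that neither limit is spoiled by the order demanded by the other.
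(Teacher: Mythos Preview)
Your plan is essentially the paper's own: use Lemma~\ref{lem:bai} to rewrite the bounded-approximate-identity computation, use Lemma~\ref{lem:qc} for the quasi-central estimate, telescope each difference into terms controlled by (SA), (CA), \eqref{cond:C1}, \eqref{cond:C2}, and then extract a diagonal subnet.

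The one substantive point of divergence is your handling of the order of $\alpha$ and $\beta$, which you flag as ``the delicate point''. In fact there is no competition. In the approximate-identity computation, after cancelling the outer unitary you are left with $\langle X_3\,W_{13}W_{23}{W'}^{*}_{13}\Theta,\ldots\rangle$; the paper first removes $W_{23}$ using (CA) (leg~2 still carries $\eta_\beta$, but the vector on leg~3 depends on $\alpha$, so this needs $\beta$ after $\alpha$), and then $W_{13}{W'}^{*}_{13}$ collapses directly by \eqref{cond:C1} applied to $\xi_\alpha\otimes\zeta$ with the \emph{fixed} test vector $\zeta$, which only needs $\alpha$ large. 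Your alternative of first swapping ${W'}^{*}_{13}$ for $W^{*}_{13}$ via \eqref{cond:C1} and then simplifying $W_{13}W_{23}W^{*}_{13}$ also works, and for the same reason requires $\beta$ after $\alpha$, not the reverse; you do not need the pentagon relation or \eqref{cond:C2} at this stage. Likewise, in the quasi-central estimate the (SA) and \eqref{cond:C1} terms involve only the fixed $\zeta$ on the outer leg, while the \eqref{cond:C2} terms involve $\eta_\beta$ against a vector depending on $\alpha$. Thus every estimate is satisfied by choosing $\alpha$ first and $\beta$ afterwards, and a single diagonal subnet handles both properties without any tension.
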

\begin{proof}
For $X\in L^\infty(\G)$ and $\zeta\in L^2(\G)$, we work towards showing that $u_\gamma$ is a bounded approximate identity by considering the value of $X(u_{\alpha, \beta}\ast \omega_\zeta)$ .

By applying lemma \eqref{lem:bai} and noting that $X_3 = 1\otimes 1\otimes X$ commutes with anything in the first two components, we see that:
\begin{align*}
X&(u_{\alpha, \beta}\ast \omega_\zeta)\\
 &= \langle  X_3 W_{23} W_{12}W_{12}^{\text{op}'*}\xi_\alpha\otimes\eta_\beta\otimes\zeta, W_{23} W_{12}W_{12}^{\text{op}'*}\xi_\alpha\otimes\eta_\beta\otimes\zeta\rangle\\
&= \langle  X_3 W_{12}{{W}'}_{12}^{\text{op}*}W_{13}W_{23}{W'}_{13}^* \xi_\alpha\otimes\eta_\beta\otimes\zeta,    W_{12}{{W}'}_{12}^{\text{op}*}W_{13}W_{23}{W'}_{13}^*  \xi_\alpha\otimes\eta_\beta\otimes\zeta\rangle \\
&= \langle  X_3 W_{13}W_{23}{W'}_{13}^* \xi_\alpha\otimes\eta_\beta\otimes\zeta,    W_{13}W_{23}{W'}_{13}^*  \xi_\alpha\otimes\eta_\beta\otimes\zeta\rangle.
\end{align*}

Using this, and the triangle inequality, we see that:
\begin{align*}
|X(u_{\alpha, \beta}&\ast \omega_\zeta)  - X(\omega_\zeta)|\\
&= | \langle X_3 W_{13}W_{23}{W'}_{13}^* \xi_\alpha\otimes\eta_\beta\otimes\zeta,    W_{13}W_{23}{W'}_{13}^*  \xi_\alpha\otimes\eta_\beta\otimes\zeta\rangle - \langle X\zeta, \zeta\rangle| \\
&\leq \left| \langle  X_3 W_{13}W_{23}{W'}_{13}^* \xi_\alpha\otimes\eta_\beta\otimes\zeta,    W_{13}W_{23}{W'}_{13}^*  \xi_\alpha\otimes\eta_\beta\otimes\zeta\rangle \right. \\
&\quad \quad \left.- \langle  X_3 W_{13}{W'}_{13}^* \xi_\alpha\otimes\eta_\beta\otimes\zeta,    W_{13}{W'}_{13}^*  \xi_\alpha\otimes\eta_\beta\otimes\zeta\rangle\right| \\
&\quad +\left| \langle X_3 W_{13}{W'}_{13}^* \xi_\alpha\otimes\eta_\beta\otimes\zeta,    W_{13}{W'}_{13}^*  \xi_\alpha\otimes\eta_\beta\otimes\zeta\rangle \right.\\
&\quad\quad -\left. \langle X_3\xi_\alpha\otimes\eta_\beta\otimes\zeta, \xi_\alpha\otimes\eta_\beta\otimes\zeta\rangle\right| \\
&\leq 2\|X\|\left(\|W{W'}^*\xi_\alpha\otimes\zeta - \xi_\alpha\otimes\zeta \| \right.\\
&\quad \left.+ \|W_{23}{W'}^*_{13} \xi_\alpha\otimes\eta_\beta\otimes\zeta - {W'}^*_{13} \xi_\alpha\otimes\eta_\beta\otimes\zeta\|\right)
\end{align*}

Now we consider a fixed $\Lambda\in L^\infty(\G)\otimes_{VN} L^\infty(\G)$ and $\zeta\in L^2(\G)$ to show the quasi-central property.  By lemma \eqref{lem:qc} and since $\Lambda_{13}$, and $W'_{12}$ commute:
\begin{align*}
&(\omega_\zeta\otimes u_\gamma)({W'}^*W^{\text{op}'}\Lambda W^{\text{op}'*}{W'} - \Lambda) \\
&= \langle \Lambda_{13} W_{13}^{\text{op}'*}{W'}_{13}W_{23}^{\text{op}'*}W_{23}\zeta\otimes\xi_\alpha\otimes\eta_\beta, W_{13}^{\text{op}'*}{W'}_{13}W_{23}^{\text{op}'*}W_{23}\zeta\otimes\xi_\alpha\otimes\eta_\beta\rangle\\
&\quad  - \langle \Lambda_{13} W_{23}^{\text{op}'*}W_{23}\zeta\otimes\xi_\alpha\otimes\eta_\beta, W_{23}^{\text{op}'*}W_{23}\zeta\otimes\xi_\alpha\otimes\eta_\beta\rangle\\
&=\hspace{-.06cm}\langle \Lambda_{13} {W}_{12}^{'*}{W}_{23}^{\text{op}'*}W'_{23}W'_{12}{W}_{23}^{'*}W_{23}\zeta\hspace{-.06cm}\otimes\hspace{-.06cm}\xi_\alpha\hspace{-.06cm}\otimes\hspace{-.06cm}\eta_\beta, {W}_{12}^{'*}{W}_{23}^{\text{op}'*}W'_{23}W'_{12}{W}_{23}^{'*}W_{23}\zeta\hspace{-.06cm}\otimes\hspace{-.06cm}\xi_\alpha\hspace{-.06cm}\otimes\hspace{-.06cm}\eta_\beta\rangle\\
&\quad  - \langle \Lambda_{13} W_{23}^{\text{op}'*}W_{23}\zeta\otimes\xi_\alpha\otimes\eta_\beta, W_{23}^{\text{op}'*}W_{23}\zeta\otimes\xi_\alpha\otimes\eta_\beta\rangle\\
&= \langle \Lambda_{13} {W'}_{23}^{\text{op}*}W'_{23}W'_{12}{W'}_{23}^*W_{23}\zeta\otimes\xi_\alpha\otimes\eta_\beta, {W'}_{23}^{\text{op}*}W'_{23}W'_{12}{W'}_{23}^*W_{23}\zeta\otimes\xi_\alpha\otimes\eta_\beta\rangle\\
&\quad  - \langle \Lambda_{13} W_{23}^{\text{op}'*}W_{23}\zeta\otimes\xi_\alpha\otimes\eta_\beta, W_{23}^{\text{op}'*}W_{23}\zeta\otimes\xi_\alpha\otimes\eta_\beta\rangle
\end{align*}

So now we have:
\begin{align*}
|(&\omega_\zeta\otimes u_\gamma)({W'}^*W^{\text{op}'}\Lambda W^{\text{op}'*}{W'} - \Lambda)| \\
&=|\langle \Lambda_{13} {W'}_{23}^{\text{op}*}W'_{23}W'_{12}{W'}_{23}^*W_{23}\zeta\otimes\xi_\alpha\otimes\eta_\beta, {W'}_{23}^{\text{op}*}W'_{23}W'_{12}{W'}_{23}^*W_{23}\zeta\otimes\xi_\alpha\otimes\eta_\beta\rangle\\
&\quad\quad 
- \langle \Lambda_{13} W_{23}^{\text{op}'*}W_{23}\zeta\otimes\xi_\alpha\otimes\eta_\beta, W_{23}^{\text{op}'*}W_{23}\zeta\otimes\xi_\alpha\otimes\eta_\beta\rangle|\\
&\leq |\langle \Lambda_{13} {W'}_{23}^{\text{op}*}W'_{23}W'_{12}{W'}_{23}^*W_{23}\zeta\otimes\xi_\alpha\otimes\eta_\beta, {W'}_{23}^{\text{op}*}W'_{23}W'_{12}{W'}_{23}^*W_{23}\zeta\otimes\xi_\alpha\otimes\eta_\beta\rangle\\
&\quad\quad
-\langle \Lambda_{13} {W'}_{23}^{\text{op}*}W'_{23}W'_{12}\zeta\otimes\xi_\alpha\otimes\eta_\beta, {W'}_{23}^{\text{op}*}W'_{23}W'_{12}\zeta\otimes\xi_\alpha\otimes\eta_\beta\rangle |\\
&\quad+|\langle \Lambda_{13} W_{23}^{\text{op}'*}W'_{23}W'_{12}\zeta\otimes\xi_\alpha\otimes\eta_\beta, W_{23}^{\text{op}'*}W'_{23}W'_{12}\zeta\otimes\xi_\alpha\otimes\eta_\beta\rangle\\
&\quad \quad
- \langle\Lambda_{13} W_{23}^{\text{op}'*}W'_{23}\zeta\otimes\xi_\alpha\otimes\eta_\beta, W_{23}^{\text{op}'*}W'_{23}\zeta\otimes\xi_\alpha\otimes\eta_\beta\rangle|\\
&\quad +|\langle \Lambda_{13} W_{23}^{\text{op}'*}W_{23}{W}^*_{23}W'_{23}\zeta\otimes\xi_\alpha\otimes\eta_\beta, W_{23}^{\text{op}'*}W_{23}{W}^*_{23}W'_{23}\zeta\otimes\xi_\alpha\otimes\eta_\beta\rangle\\
&\quad \quad
- \langle \Lambda_{13} W_{23}^{\text{op}'*}W_{23}\zeta\otimes\xi_\alpha\otimes\eta_\beta, W_{23}^{\text{op}'*}W_{23}\zeta\otimes\xi_\alpha\otimes\eta_\beta\rangle|\\
&\leq 2\|\Lambda\|_{\infty}\left( \|{W'}_{23}^*W_{23}\zeta\otimes\xi_\alpha\otimes\eta_\beta - \zeta\otimes\xi_\alpha\otimes\eta_\beta\|_2\right.\\
&\quad\quad + \|W'_{12}\zeta\otimes\xi_\alpha\otimes\eta_\beta - \zeta\otimes\xi_\alpha\otimes\eta_\beta\|_2\\
&\left.\quad\quad + \|{W}^*_{23}W'_{23}\zeta\otimes\xi_\alpha\otimes\eta_\beta- \zeta\otimes\xi_\alpha\otimes\eta_\beta\|_2\right)
\end{align*}

For fixed $\varepsilon>0$, there exists an $\alpha_\varepsilon$ such that, by (SA)
\[
\|W'_{12}\zeta\otimes\xi_\alpha\otimes\eta_\beta - \zeta\otimes\xi_\alpha\otimes\eta_\beta\|_2
<\varepsilon
\]
and by \eqref{cond:C1}
\[
\|W{W'}^*\xi_\alpha\otimes\zeta - \xi_\alpha\otimes\zeta \|
<\varepsilon
\]
Furthmore, there is a $\beta_\varepsilon$ such that, by (CA)
\[
 \|W_{23}{W'}^*_{13} \xi_\alpha\otimes\eta_\beta\otimes\zeta - {W'}^*_{13} \xi_\alpha\otimes\eta_\beta\otimes\zeta\|
<\varepsilon
\]
and by \eqref{cond:C2}
\begin{align*}
 \|{W'}_{23}^*W_{23}\zeta\otimes\xi_\alpha\otimes\eta_\beta - \zeta\otimes\xi_\alpha\otimes\eta_\beta\|_2
&<\varepsilon \text{ and }\\
\|{W}^*_{23}W'_{23}\zeta\otimes\xi_\alpha\otimes\eta_\beta- \zeta\otimes\xi_\alpha\otimes\eta_\beta\|_2
&<\varepsilon
\end{align*}

So, by choosing $\beta$ after $\alpha$ there is a subnet $u_\gamma$ which is a bounded approximate identity satisfying condition \eqref{cond:qc}.
\end{proof}
Converting the result above for $L^1(\G)$ into the corresponding result for $L^2(\G)$ that is desired for the previous result may be difficult.  It is worth noting the similarity between this challenge and the
open problem of whether the existence of a left invariant mean implies strong amenability.  Runde and Daws conjectured that some version of Leptin's theorem would be helpful in the latter case.  It would perhaps be similarly helpful for the former.
\\
\\
{\bf Acknowledgement.} The author gratefully acknowledges the financial support of Hanyang University via a research fund for new professors.  

The author would like to thank Professor Zhiguo Hu and others at the University of Windsor for many helpful comments and guidance in this research.  

The author would like to thank the referee for his/her careful reading of the paper and helpful comments and corrections and Yemon Choi for directing the author to the paper of Ruan and Xu \cite{ruanxu}.

\bibliographystyle{amsplain}

\end{document}